\title{}
\author{}
\date{}
\newtheorem{theorem}{Theorem}[section]
\newtheorem{lemma}[theorem]{Lemma}
\newtheorem{corollary}[theorem]{Corollary}
\newenvironment{proof}[1][Proof]{\begin{trivlist}
\item[\hskip \labelsep {\bfseries #1}]}{\end{trivlist}}
\newenvironment{@abssec}[1]{%
\if@twocolumn
\section*{#1}%
\else
\vspace{.05in}\footnotesize
\parindent .2in
{\upshape\bfseries #1. }\ignorespaces
\fi}
{\if@twocolumn\else\par\vspace{.1in}\fi}
\newcommand{\qed}{\nobreak \ifvmode \relax \else
\ifdim\lastskip<1.5em \hskip-\lastskip
\hskip1.5em plus0em minus0.5em \fi \nobreak
\vrule height0.75em width0.5em depth0.25em\fi}
\newcommand{\torus}{{\mathbb{T}^d}}
\newcommand{\Torus}{{\mathbb{T}^2}}
\newcommand{\mm}{\mathbb}
\newcommand{\Pow}{m}
\newcommand{\Iota}{i}
\newcommand{\Q}{\mbox{WB}}
\newcommand{\Trho}{T_{ \rho }}
\newcommand{\Poincare}{Poincar{\'e}}
\newcommand\keywordsname{Key words}
\newcommand\AMSname{AMS subject classifications}
\begin{document}
\title{Super convergence of ergodic averages for quasiperiodic orbits}
\author{Suddhasattwa Das\footnotemark[1], \and James A Yorke\footnotemark[2]}
\footnotetext[1]{Department of Mathematics, University of Maryland, College Park}
\footnotetext[2]{Institute for Physical Science and Technology, University of Maryland, College Park}

\date{\today}
\maketitle

\renewcommand*\contentsname{Contents}

\begin{abstract}
The Birkhoff Ergodic Theorem asserts that time averages of a function evaluated along a trajectory of length $N$ converge to the space average, the integral of $f$, as $N\to\infty$, for ergodic dynamical systems. But that convergence can be slow. Instead of uniform averages that assign equal weights to points along the trajectory, we use an average with a non-uniform distribution of weights, weighing the early and late points of the trajectory much less than those near the midpoint $N/2$. We show that in quasiperiodic dynamical systems, our weighted averages converge far faster provided $f$ is sufficiently differentiable.  This result can be applied to obtain efficient numerical computation of rotation numbers, invariant densities and conjugacies of quasiperiodic systems.
\end{abstract}

{\bf Keywords:} Quasiperiodicity, Birkhoff Ergodic Theorem, Rotation Number

\section{Introduction}
\textbf{Quasiperiodicity.} Each $ \rho \in (0,1)^d$ defines a (pure) \textbf{rotation} on the $d$-dimensional torus $\torus$, i.e. the map $\Trho$ defined as
\begin{equation}\label{eqn:Rot}
\Trho: \theta \mapsto \theta + \rho \mod~1\mbox{ in each coordinate.}
\end{equation}
This map acts on each coordinate $\theta_j$ by rotating it by some angle $\rho_j$.
We call the $\rho_j$'s {\bf ``rotation numbers''}.
We say $ \rho =(\rho_1,\ldots,\rho_d)\in\mathbb{R}^d$ is  \textbf{irrational} if there are no integers $k_j$ for which $k_1 \rho_1 + \cdots + k_d \rho_d \in\mathbb{Z}$, except when all $k_j$ are zero. In particular, this implies that each $\rho_j$ must be irrational. 
The simplest example of $d$-dimensional quasiperiodicity is a (pure) \textbf{irrational rotation}, that is, a rotation by an irrational vector $\rho$.
A map $T:X\to X$ is said to be \textbf{$d$-dimensionally $C^m$ quasiperiodic} on a set $X_0\subseteq X$ for some $d\in\mathbb{N}$ iff there is a $C^m$-diffeomorphism $h:\torus\rightarrow X_0$, such that,
\begin{equation}\label{eqn:conjugacy2}
T(h(\theta)) = h( \Trho ( \theta)).
\end{equation}
where $\Trho$ is an irrational rotation. In this case, $h$ is a conjugacy of $T$ to $\Trho$. It has been conjectured in \cite{sander:yorke:15} that for typical, randomly chosen physical systems, it is one of apparently only three kinds of maximal recurrent sets that are likely to be present, the other two being periodic orbits and chaotic sets.

{\bf An example of a quasiperiodic trajectory of a flow.} To see quasiperiodicity, one needs only look at the moon. See \cite{moon_orbit}. More precisely, there are quasiperiodic trajectories when modeling the sun-earth-moon system as a circular restricted three-body problem in $\mm R^6$ (position + velocity) with Newtonian gravitation. Let $q_1, q_2, q_3$ be the positional coordinates where earth-sun rotation is in the $q_1-q_2$ plane. The moon is not restricted to that plane. Using rotating coordinates in which the sun and earth are fixed on the $q_1$ axis, we can get an excellent approximation of the moon's orbit as a quasiperiodic flow on $T^3 \subset \mm R^6$. ``Restricted’’ means that the moon's mass is modeled as negligible. Note that the masses are modeled as points so tidal forces are not included, nor is the influence of the other planets. 

{\bf An example of a quasiperiodic trajectory of a map.} Following \Poincare, we also can get the Poincar{\'e}  return map $F$ by fixing a value for the Hamiltonian of the system and by taking trajectory points on the $q_1-q_3$ plane, where $q_2=0$ and $dq_2/dt >0$. The moon passes through this plane about once a month. Here the flow on $\mm T^3$ is reduced to a map on $\mm T^2$. We note that the map $F$ is real analytic.

{\bf The Conjugacy Problem}. Given an apparently $d$-dimensional chaotic trajectory $(x_n)$ in $R^D$, how do we demonstrate that it is quasiperiodic? From the definition, it is sufficient to find the conjugacy map $h:T^d\to R^D$ and find the rotation numbers $ \rho  = (\rho_1, \cdots, \rho_d)$ for which 
$x_n = h(n \rho_1, \cdots, n\rho_d).$ 
If $h$ is found, then we will have shown that the pure rotation with rotation numbers $ \rho $ is conjugate to $F$ restricted to $X_0 = h(T^d)$. We call the problem of finding $ \rho $ and $h$ -- knowing only the trajectory $(x_n)$ -- the {\bf Conjugacy~Problem} for a quasiperiodic trajectory. Our approach to finding $h$ is to compute the Fourier Series for $h$ using $\Q_N$. 

\textbf{The Birkhoff Ergodic Theorem.} Let $T:X\rightarrow X$ be a map on a topological space $X$ with a probability measure $\mu$ for which $T$ is invariant. Given a point $x$ in $X$ and a real- or vector-valued function $f$ on $X$, we will refer to a long-time average of the form
\begin{equation}\label{eqn:Birkhoff_sum}
B_N(f) :=
 \frac{1}{N}\sum\limits_{n=0}^{N-1}f(T^n(x)),
\end{equation}
as a {\bf Birkhoff average}.
Throughout this paper, $f:X \to E$ where $E$ is a finite-dimensional real vector space.
The limits of such sequences frequently occur in dynamical systems.
If $\mu$ is a probability measure on $X$ and $T$ preserves the probability measure $\mu$ and is ergodic, then the von Neumann Ergodic Theorem (e.g. see Theorem 4.5.2. in \cite{BrinStuck}) states that { for $f\in L^2(X,
\mu)$, the Birkhoff average (Eq. \ref{eqn:Birkhoff_sum}) converges in the $L^2$ norm to the integral $\int_X fd\mu$.
}
The Birkhoff Ergodic Theorem (see Theorem 4.5.5. in \cite{BrinStuck}) strengthens von Neumann's theorem and concludes that if $f\in L^1(X,
\mu)$, then Eq. \ref{eqn:Birkhoff_sum} converges to the integral $\int_X fd\mu$ for $\mu$-a.e. point $x\in X$.
The Birkhoff average (Eq. \ref{eqn:Birkhoff_sum}) can be interpreted as an approximation to an integral, but convergence is very slow. For any non-constant $f$, there is a constant $C>0$ such that the following holds for infinitely many $N$.
\[|\frac{1}{N}{\Sigma}_{n=1}^Nf(T^n(x))- \int_X fd\mu| \ge C N^{-1}.\]

For general ergodic dynamical systems, the rate of convergence of these sums can be arbitrarily slow, as shown in \cite{ErgCnvrgnc2}.
For many purposes the speed of convergence is irrelevant but it is important for numerical computations.

{\bf Definition.}
Let $(a_N)_{N=0}^\infty$ be a sequence in a normed vector space such that $a_N \to b$ as $N\to\infty$. We say $(a_N)$ has {\bf super-polynomial convergence} to $b$ or {\bf super converges} to $b$ if for each integer $m>0$ there is a constant $C_m >0$ such that
\[|a_N - b| \le C_m N^{-m} \mbox{ for all m}.\]

{\textbf{Definition.} A function $w:\mathbb{R}\to [0,\infty)$ is said to be a \boldmath $C^\infty$ \unboldmath \textbf{bump function} if $w$ is $C^\infty$ and
 the support of $w$ is $[0,1]$ and $\int_\mathbb{R}w(x)dx \ne 0$ and $w$ and all of its derivatives vanish at $0$ and $1$. When we use a bump function as in Eq. \ref{eqn:QN}, it is independent of scaling; hence we can assume without loss of generality that $\int_\mathbb{R}w(x)dx=1$.
 Eq. \ref{eqn:weight} below gives an example of a $C^\infty$ bump function that we will call the {\bf exponential weighting},

\begin{equation}\label{eqn:weight}
w(t)=\begin{cases}
\exp\left(\frac{1}{t(t-1)}\right), & \mbox{for } t\in(0,1)\\
0, & \mbox{for } t\notin(0,1)
\end{cases}
\end{equation}

Instead of weighting the terms $f(T^n(x))$ in the average equally, we weight the early and late terms of the set ${1,\cdots,N}$ much less than the terms with $n\sim N/2$ in the middle.
We insert a weighting function $w$ into the Birkhoff average to get what we call a {\bf Weighted Birkhoff ($\Q_N$) average}. Let \boldmath $\torus$ \unboldmath denote a $d$-dimensional torus. For $X = \torus$ and a continuous $f$ and for $\phi\in\torus$, we define 
\begin{equation}\label{eqn:QN}
\Q_N(f)(x) :=\frac{1}{A_N}\sum\limits_{n=0}^{N-1}w(\frac{n}{N})f(T^{n}x),\mbox{ where }A_N:= \sum\limits_{n=0}^{N-1}w(\frac{n}{N}).
\end{equation}
Of course the sum of the terms $w(\frac{n}{N})/A_N$ is $1$.
Continuity of $w$ guarantees that $\underset{N\rightarrow\infty}{\lim}A_N/N=\int_0^1 w(t)dt$. We introduced the {\bf exponentially weighted Birkhoff average} of Eq. \ref{eqn:QN} for numerical investigations of quasiperiodic systems in \cite{DSSY}. Motivated by finding how effective our method was numerically, we discovered a simple proof of its super convergence.

\textbf{Unique invariant probability measure.} Lebesgue probability measure \boldmath $\lambda$ \unboldmath on $\torus$ is the unique invariant probability measure for any irrational rotation $\Trho$. A map $T:X_0\rightarrow X_0$ therefore has a unique invariant probability measure $\mu$ which is defined as \[\mu(U):=\lambda(h^{-1}(U)),\mbox{ for every Borel set }U\subseteq X_0.\]

{\bf Diophantine rotations.} An irrational vector $  \rho \in\mathbb{R}^d$ is said to be {\bf Diophantine} if for some $\beta>0$ it is {\bf Diophantine of class} $\beta$ (see \cite{HermanSeminal}, Definition 3.1), which means there exists $C_\beta>0$ such that for every $ k \in\mathbb{Z}^d$, $ k \neq 0$ and every $n\in\mathbb{Z}$,
\begin{equation}\label{eqn:Dioph}
| k \cdot \rho -n|\geq\frac{C_\beta}{\|k\|^{d+\beta}.}
\end{equation}
For every $\beta>0$ the set of Diophantine vectors of class $\beta$ have full Lebesgue measure in $\mathbb{R}^d$ (see \cite{HermanSeminal}, 4.1). The Diophantine class is crucial in the study of quasiperiodic behavior, for example in \cite{Simo1} and \cite{Simo2}. Ineq. \ref{eqn:Dioph} is central to Arnold's study of small denominators in \cite{Arnold}.

\begin{theorem}\label{thm:A1}
Let $X$ be a $C^\infty$ manifold and $T:X \to X$ be a d-dimensional $C^\infty$ map which is quasiperiodic on $X_0\subseteq X$, with invariant probability measure $\mu$. Assume $T$ has a Diophantine rotation vector. Let $f:X \to E$ be $C^\infty$, where $E$ is a finite-dimensional, real vector space. Assume $w$ is a $C^\infty$ bump function. Then for each $x_0\in X_0$, the weighted Birkhoff average
$\Q_Nf(x_0)$ has super convergence to $\lim_{N\to\infty} B_N(f)(x_0) = \int_{X_0}fd\mu$. Moreover, the convergence is uniform in $x_0$.
\end{theorem}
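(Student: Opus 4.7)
The plan is to use the conjugacy to transport the problem to a pure rotation on the torus, then expand everything in Fourier series and estimate the resulting one-dimensional exponential sums using the rapid decay of $\hat{w}$ together with the Diophantine condition.

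\textbf{Step 1: Reduce to pure rotation.} Let $h:\torus\to X_0$ be the $C^\infty$ conjugacy from Eq.~\ref{eqn:conjugacy2}, and set $g := f\circ h:\torus\to E$. Writing $x_0 = h(\theta_0)$, the iterate $T^n(x_0) = h(\theta_0 + n\rho)$, and the invariance identity $\mu = h_*\lambda$ gives $\int_{X_0}f\,d\mu = \int_\torus g\,d\lambda$. Because $h$ is a $C^\infty$ diffeomorphism of compact manifolds and $f$ is $C^\infty$, the pulled-back function $g$ is $C^\infty$ on $\torus$. It therefore suffices to prove that
\[
\Q_N g(\theta_0) \;=\; \frac{1}{A_N}\sum_{n=0}^{N-1} w(n/N)\,g(\theta_0 + n\rho)
\]
super-converges to $\hat g(0) := \int_\torus g\,d\lambda$, uniformly in $\theta_0$.

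\textbf{Step 2: Fourier expansion.} Expand $g(\theta) = \sum_{k\in\mathbb{Z}^d}\hat g(k)\,e^{2\pi \Iota k\cdot\theta}$; since $g\in C^\infty(\torus)$, the coefficients satisfy $|\hat g(k)|\le D_M\|k\|^{-M}$ for every $M$. Substituting and swapping sums (justified by the rapid decay of $\hat g$) gives
\[
\Q_N g(\theta_0) - \hat g(0) \;=\; \sum_{k\neq 0}\hat g(k)\,e^{2\pi \Iota k\cdot\theta_0}\,S_N(k),
\qquad
S_N(k) := \frac{1}{A_N}\sum_{n=0}^{N-1} w(n/N)\,e^{2\pi\Iota n\,k\cdot\rho}.
\]
All $\theta_0$-dependence sits in unit-modulus factors, so any bound on $|S_N(k)|$ gives a uniform bound.

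\textbf{Step 3: Bound $S_N(k)$ via Poisson summation.} View $\psi(t) := w(t)e^{2\pi \Iota N(k\cdot\rho)t}$ as a $C^\infty$ function on $[0,1]$ which, together with all its derivatives, vanishes at the endpoints. Periodizing, its Fourier coefficients are $c_j = \hat w(j - N\,k\cdot\rho)$, where $\hat w(\xi) = \int_0^1 w(t)e^{-2\pi \Iota \xi t}dt$. The trapezoid/Poisson identity $\frac{1}{N}\sum_{n=0}^{N-1}\psi(n/N) = \sum_{j\in\mathbb{Z}} c_{jN}$ yields
\[
\sum_{n=0}^{N-1} w(n/N)\,e^{2\pi\Iota n\,k\cdot\rho}
\;=\; N\sum_{j\in\mathbb{Z}} \hat w\bigl(N(j - k\cdot\rho)\bigr).
\]
Because $w$ is $C^\infty$ with compact support, repeated integration by parts gives $|\hat w(\xi)|\le W_m|\xi|^{-m}$ for every $m\ge 0$.

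\textbf{Step 4: Invoke the Diophantine condition.} The distance $\|k\cdot\rho\|_{\mathbb{Z}} := \min_{j\in\mathbb{Z}}|j - k\cdot\rho|$ is bounded below by $C_\beta\|k\|^{-(d+\beta)}$. The nearest-integer term in the sum is therefore bounded by $W_m(N\|k\cdot\rho\|_{\mathbb{Z}})^{-m}\le W_m C_\beta^{-m}N^{-m}\|k\|^{m(d+\beta)}$, and the remaining terms are summed geometrically against $\sum_{j\ne 0}|j - k\cdot\rho|^{-m}\le K_m$. Combining with $A_N\sim N\int w$ gives, for each $m$,
\[
|S_N(k)| \;\le\; C_m\, N^{-m}\,\|k\|^{m(d+\beta)}.
\]

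\textbf{Step 5: Sum over $k$.} For any target decay rate $m$, choose $M > m(d+\beta) + d + 1$ in the Fourier decay of $g$; then
\[
|\Q_N g(\theta_0) - \hat g(0)| \;\le\; C_m N^{-m}\sum_{k\ne 0} D_M\,\|k\|^{m(d+\beta)-M} \;\le\; C'_m N^{-m},
\]
uniformly in $\theta_0$, establishing super convergence.

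\textbf{Main obstacle.} The arithmetic heart of the argument is Step~4: the sum $S_N(k)$ is small only because $w$ has vanishing derivatives of all orders at the endpoints (killing the Euler--Maclaurin boundary terms that would otherwise give only $O(N^{-1})$), yet the resulting bound grows polynomially in $\|k\|$. The whole scheme works only because the super-algebraic decay of $\hat g$ beats any fixed polynomial growth in $\|k\|$ --- one must carefully track how the loss exponent $m(d+\beta)$ depends on the target rate $m$ and choose $M$ accordingly.
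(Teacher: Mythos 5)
Your proposal is correct and follows essentially the same route as the paper's proof of Theorem \ref{thm:A2} (of which Theorem \ref{thm:A1} is the $M=\infty$ corollary): conjugate to the pure rotation, expand $f\circ h$ in a Fourier series, apply Poisson summation to convert the weighted exponential sum into values of $\hat w$ at the points $N(j-k\cdot\rho)$, use $m$-fold integration by parts for the decay of $\hat w$, isolate the nearest integer to $k\cdot\rho$ via the Diophantine inequality, and close by choosing the Fourier-decay exponent $M$ large relative to $m(d+\beta)+d$. The only differences are cosmetic (you use the aliasing form of Poisson summation rather than the continuous one, and a slightly more conservative choice of $M$), so there is nothing substantive to add.
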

Theorem \ref{thm:A1} is proved as a corollary of Theorem \ref{thm:A2} in Section \ref{sect:Cm_Version}. 

\textbf{Applicability to chaos?} Since $\Q_N(f)(x_0)$ and $B_N(f)(x_0)$ have the same limit as $N\to\infty$, the weighted Birkhoff average provides a fast way of computing the limit of a Birkhoff average. If the ergodic process $T$ is chaotic instead of quasiperiodic, then weighted Birkhoff averages provide no advantage over Birkhoff averages.

\textbf{The need for fast convergence.} We note that in \cite{DSSY} we get convergence to the limit of our 30-digit numerical precision in all of our 1D quasiperiodicity studies well before $N=10^6$ iterates. 
If we used $B_N$, we would expect to need about $N\sim 10^{30}$, higher by a factor of about $10^{25}$. 
Hence if we can compute $10^6$ iterates per second, $\Q_N$ would require one second while $B_N$ would require one billion billion years. 

\textbf{Related methods.} The accelerated convergence of weighted ergodic sums have been discussed in \cite{ErgAvg4} , \cite{ErgAvg6} and \cite{ErgAvg7}, but without conclusions of specific rates of convergence. In \cite{ErgAvg8}, a convergence rate of $O(N^{-\alpha})$, $(0<\alpha<1)$, was obtained for functionals in $L^{2+\epsilon}$ for a certain choice of weights.

In \cite{luque:villanueva:14}, A. Luque and J. Villanueva develop methods for obtaining rotation numbers by taking repeated averages of averages of a quasiperiodic signal. By taking $p$ nested averages, their method converges to the rotation number with an error bounded by $C_p N^{-p}$, where $C_p$ is a constant and $N\geq 2^p$. Their method of computation depends on the choice of convergence rate $p$ and as $p$ increases the computational complexity increases for fixed $N$. Also, their computation time $T(p,N)$ obeys $T(p,N)/N\rightarrow\infty$ as $p\rightarrow\infty$. In comparison, computation time for our weighted Birkhoff average is simply proportional to $N$ since it requires an average of $N$ numbers, including the evaluation of $w$ at $N$ points. 

Jacques Laskar uses a $C^1$ weighting function, the Hanning window filter, essentially the function $\sin^2(\pi t)$ on $(0,1)$ except that he uses a two-sided version on $(-1,1)$, in investigating almost periodic perturbations of almost periodic flows in several papers \cite{Laskar99,Laskar93a,Laskar93b,Laskar03}.  But he also states in an appendix (Remark 2, p.146) to \cite{Laskar99}) that one can consider what is essentially our Weighted Birkhoff average, Eq. \ref{eqn:QN}. He reports that it has excellent asymptotic properties, 
but we do not find in his work any examples where he has implemented it.

{\bf A companion paper.} This project has two components. The rigorous foundation for our methods are reported here and the numerical work is reported in \cite{DSSY}; the effectiveness of the theoretical work is demonstrated through the numerical results; that is, we get really fast convergence in practice. The reliability of the numerical work in \cite{DSSY} is correspondingly  based on the rigorous results presented here so that when we know a sequence is guaranteed to converge and then we get numerical convergence to the available precision, it suggest strongly that the results are correct. In that paper we present four examples:
\begin{itemize}
	\item The restricted circular three-body problem in $\mm R^2$ with 1D quasiperiodic curves (dimension d=1).
	\item A periodically forced van der Pol oscillator with an attracting quasiperiodic curve.
	\item The Chirikov-Taylor standard map (area-preserving map on $\Torus$) with quasiperiodic closed curves.
	\item A map on a torus $\Torus$ exhibiting $d=2$-dimensional quasiperiodicity.
\end{itemize}

The first three of these investigations are for one-dimensional quasiperiodic curves in dimension 2, while the fourth is for two-dimensional quasiperiodic dynamics on $\Torus$. All of these problems are real analytic, so our hypotheses below of $C^\infty$ dynamics is satisfied by these examples.
In addition, toy examples are presented in which a rotation number is known \emph{a priori} and the methods converge to the actual rotation number. 

\section{Applications of Theorem \ref{thm:A1}}


\subsection{Example 1. Rotation vectors}
By the Birkhoff ergodic theorem, a function can be integrated using just its values along a trajectory, without knowing the points on the trajectory or the invariant set is. Given a map $T:X\to X$ with a quasiperiodic set $X_0$ and a trajectory $(x_n)\subset X_0$,we must convert each $x_n$ into a set of $d$-dimensional angles $\phi_n$, as shown for a 1D-case in Fig. \ref{fig:Cashew}. Our goal is to determine the rotation vector $ \rho \in\torus$, purely from $(\phi_n)\in\torus$. There are some hidden subtleties. The standard approach towards computing $ \rho $ is as the limit $ \rho =\underset{N\rightarrow\infty}{\lim}N^{-1}(\bar\phi_N-\bar\phi_0)$, where the quantity $\bar\phi_N-\bar\phi_0$ is computed as $\sum\limits_{n=0}^{N-1}\Delta\phi_n$, where $\Delta\phi_n:=\angle(\phi_{n+1},\phi_n)$ is some measure of the angle from $\phi_n$ to $\phi_{n+1}$. 

Given a quasiperiodic trajectory, we need to define smooth functions $\Delta\phi$ and $\phi$, taking values in $\mathbb{R}^d$ and $\mathbb{T}^d$ respectively. This general problem can be quite difficult.

Even in the 1D-case, there are two ways to measure the angle between $\phi_n$ and $\phi_{n+1}$. Sometimes, $\Delta\phi$ can be chosen to be the positive angle from $\phi_n$ to $\phi_{n+1}$. That would not work however in Fig. \ref{fig:Cashew}, where $\Delta\phi$ must be the shortest angular difference from $\phi_n$ to $\phi_{n+1}$. In another example, if an astronomer measures the angular position of Mars from the earth, the angle sometimes makes small positive changes and sometimes negative, exhibiting ``retrograde motion''. In such a case, it is appropriate to choose $\Delta\phi$ so as to minimize the absolute value of the angular change. 

To avoid these problems, we will make two assumptions on $\phi$ and $\Delta\phi$. We begin with a definition.

\textbf{Definition. } Let $\phi:X_0\to \torus$ be a $C^\infty$ map. Then the lift $\bar\phi:\bar{X_0}\equiv\mathbb{R}^d\to\mathbb{R}^d$ can be written in the form $\bar\phi(\theta) = A\bar\theta + g(\theta) $, where $g : \mathbb{R}^d\to \mathbb{R}^d$ is bounded (and periodic with period 1 in each coordinate), and $A$ is a matrix called the \textbf{homology matrix} of $\phi$, so that
\[A = \phi_*:H_1(X_0)\to H_1(\torus).\]
Note that the entries of $A$ are again integers. We say $\phi$ is a {\bf unit degree map} if the determinant of $A$ is $\pm 1$. In that case, $A^{-1}$ is also an integer valued matrix with determinant $\pm 1$. Note that, if $\phi$ is unit-degree, then there is a choice of coordinates for $\theta$ under which $A$ is the identity, and henceforth, we will assume that $A$ is the identity.

\textbf{(A1) }$\phi$ is a $C^\infty$ unit degree map.

\textbf{(A2) }$\Delta\phi:X_0\to\mathbb{R}^d$ is a $C^\infty$ map such that for each $x\in X_0$, if $\bar{\phi(x)}$ is a lift of $\phi(x)$, then $\Delta\phi(x)+\bar{\phi(x)}$ is a lift of $\phi(T(x))$.

 Let $\phi_n$ be the point $\phi(x_n)$. Then note that $\Delta\phi(x_n)$ is a lift of $\phi_{n+1}-\phi_{n}$. Fix any lift $\bar\phi_0$ of $\phi_0$ and inductively define \boldmath $\bar\phi_n$ \unboldmath $:=\bar\phi_{n-1}+\Delta\phi(\theta_{n-1})$. Note that by virtue of $\Delta\phi$, $\bar\phi_n$ is always a lift of $\phi_n$.

\begin{corollary}\label{prop:rot_num_convergence}
Let $X,X_0,T,$ and $w$ be as in Theorem \ref{thm:A1}. Let $\phi: X_0\to \torus$ and $\Delta\phi:X_0\to\mathbb{R}^d$ satisfy assumptions (A1) and (A2). 
Then for every initial point $x_0\in X_0$,
\[\Q_N(\Delta\phi) :=\frac{1}{A_N}\sum \limits_{n=0}^{N-1}w(\frac{n}{N})\Delta\phi_n\]
has super convergence to a vector $\bar{ \rho }$, which is a lift of the rotation vector $ \rho $, uniformly in $x_0$ as $N \to\infty$. 
\end{corollary}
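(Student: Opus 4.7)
The plan is to reduce the statement to Theorem \ref{thm:A1} applied to the vector-valued observable $\Delta\phi:X_0\to\mathbb{R}^d$. By assumption (A2), $\Delta\phi$ is $C^\infty$, and all other hypotheses of Theorem \ref{thm:A1} are supplied by the statement of the corollary (with target vector space $E=\mathbb{R}^d$). Theorem \ref{thm:A1} then immediately yields super convergence of $\Q_N(\Delta\phi)(x_0)$, uniformly in $x_0$, to the integral $\int_{X_0}\Delta\phi\,d\mu$. The remaining task is to identify this limit as a lift $\bar\rho\in\mathbb{R}^d$ of the rotation vector.

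To compute the integral, I would transfer the problem to $\torus$ using the conjugacy $h$. Since $\mu$ is by definition the pushforward of Lebesgue measure $\lambda$ under $h$, we have $\int_{X_0}\Delta\phi\,d\mu = \int_\torus (\Delta\phi\circ h)(\theta)\,d\lambda(\theta)$. Setting $\tilde\phi := \phi\circ h:\torus\to\torus$ and using (A1) together with the coordinate choice that makes the homology matrix the identity, $\tilde\phi$ admits a lift of the form $\bar{\tilde\phi}(\theta) = \theta + g(\theta)$ for a $C^\infty$ $\mathbb{Z}^d$-periodic function $g:\mathbb{R}^d\to\mathbb{R}^d$. Assumption (A2), combined with the intertwining relation $T\circ h = h\circ\Trho$, then forces
\[(\Delta\phi\circ h)(\theta) \;=\; \bar{\tilde\phi}(\theta+\rho) - \bar{\tilde\phi}(\theta) \;=\; \rho + g(\theta+\rho) - g(\theta).\]
Integrating over $\torus$, the two $g$-terms have equal integrals by translation invariance of $\lambda$ and so cancel, leaving $\int_\torus (\Delta\phi\circ h)\,d\lambda = \rho$, i.e.\ a lift of the rotation vector.

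The only delicate point is the bookkeeping of lifts: the iterates $\Delta\phi_n\in\mathbb{R}^d$ produce Euclidean increments, whereas $\rho$ naturally lives in $\torus$, so we need to verify that the "average increment" identified by Theorem \ref{thm:A1} is a genuine representative of $\rho$ modulo $\mathbb{Z}^d$ rather than some other object. Assumption (A1), which guarantees a lift of the form $\theta\mapsto\theta+g(\theta)$ rather than an arbitrary integer-matrix action, is exactly what makes the cancellation above clean and yields the identification of the limit with $\bar\rho$. Beyond this identification there is no further analytic obstacle: super-polynomial rate, and uniformity in $x_0$ are all inherited wholesale from Theorem \ref{thm:A1}.
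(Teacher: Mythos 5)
Your proposal is correct, and its first half (apply Theorem \ref{thm:A1} to the $C^\infty$ observable $\Delta\phi$ with $E=\mathbb{R}^d$ to get super convergence, uniformly in $x_0$, to $\int_{X_0}\Delta\phi\,d\mu$) coincides with the paper's. Where you diverge is in identifying that integral as a lift of $\rho$. The paper does this dynamically: it invokes the Birkhoff Ergodic Theorem a second time to equate $\int_{X_0}\Delta\phi\,d\mu$ with the limit of the unweighted average $\frac{1}{N}\sum_{n=0}^{N-1}\Delta\phi_n$, telescopes that sum to $(\bar\phi_N-\bar\phi_0)/N$, and then extracts the limit $A\rho$ from the asymptotics of the lift $\bar\phi_N = NA\rho + (\text{bounded})$. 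You instead compute the integral statically: pulling back by $h$, writing $\Delta\phi\circ h$ as the displacement $\bar{\tilde\phi}(\theta+\rho)-\bar{\tilde\phi}(\theta)=\rho+g(\theta+\rho)-g(\theta)$, and killing the coboundary by translation invariance of $\lambda$. Your route is arguably cleaner --- it needs no second appeal to ergodicity and no orbit asymptotics, just the measure-preservation of $\Trho$. The one point to tighten: (A2) only determines $\Delta\phi\circ h$ as that displacement \emph{up to an additive integer vector}, since two lifts of $\phi(T(x))$ differ by an element of $\mathbb{Z}^d$; the discrepancy is a continuous integer-valued function of $\theta$, hence a constant $k_0\in\mathbb{Z}^d$, so your integral is $\rho+k_0$ rather than $\rho$ itself. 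This is harmless --- $\rho+k_0$ is still a lift of the rotation vector, which is all the corollary claims --- but the equality as you wrote it is slightly stronger than what (A2) forces.
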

In other words, $ \rho =\bar{ \rho }\mod~1$, (i.e., mod 1 in each coordinate).
\begin{proof} By Theorem \ref{thm:A1}, the sum has super convergence to the integral $\int_{X_0}\Delta\phi d\mu$. By the Birkhoff Ergodic Theorem, $\int_{X_0}\Delta\phi d\mu$ is also the limit of the unweighted Birkhoff sum $\frac{1}{N}\sum \limits_{n=0}^{N-1}\Delta\Phi_n$, which is equal to $\frac{1}{N}\sum \limits_{n=0}^{N-1}[\bar{\phi}_{n+1}-\bar{\phi}_n]$, which is equal to $(\bar{\phi}_N - \bar\phi_0)/N$. Therefore, the sum in the claim has super convergence to the limit $\underset{N\rightarrow\infty}{\lim}\frac{\bar{\phi}_N - \bar\phi_0}{N}$. We will now show that this limit is the rotation number $ \rho $.

Note that since $(x_n)$ is a quasiperiodic trajectory, there exists an unknown rotation number $ \rho $ and an unknown continuous, periodic function $h:\mathbb{R}^d\to\mathbb{R}^d$ so that for each $n$, $\bar x_n=n \rho +h(n \rho )$ is a lift of $x_n$. 
\\Therefore $\bar{\phi}_n=A\bar x_n+g(\bar x_n)=nA \rho +Ah(n \rho )+g(n \rho +h(n \rho ))$.
\\Therefore $\frac{\bar{\phi}_N}{N}=A \rho +\frac{Ah(n \rho )}{N}+\frac{g(n\rho+h(n \rho ))}{N}$, which tends to $A \rho $ as $N\rightarrow\infty$.
\\Therefore, $\underset{N\rightarrow\infty}{\lim}\frac{\bar{\phi}_{N-1}-\bar{\phi}_0}{N}=A \rho $. \qed
\end{proof}

\textbf{Remark.} Consider the case of a $1$-dimensional quasiperiodic set $X_0$ embedded in $X=\mathbb{R}^2$. Let $C:= C_B \cup C_U$ be the complement of $X_0$ in $\mathbb{R}^2$, where $C_B$ and $C_U$ are the bounded and unbounded components of $C$ respectively. For $p\in \mathbb{R}^2$, define \begin{equation}\label{eqn:phi_1D}
\phi(x) = (x - p)/\|x - p\|.
\end{equation}
Hence $\phi(\theta)\in\torus$ where $d=1$. If $p\in C_B$, then $\phi$ is a unit degree map. Suppose that the range of values of $\phi(T(x))-\phi(x)\mod 1$ does not include an angle $\theta_0$. Then $\Delta\phi$ can be taken to be the difference
\[
\Delta\phi(x):\phi(T(x))-\phi(x)-\theta_0\pmod{1}+\theta_0
\]
So $\Delta\phi_n\in[0,1)$ is the length of the arc joining $\phi_n$ to $\phi_{n+1}$ and avoiding the angle $\theta_0$.. See Fig. \ref{fig:Cashew}. 

\begin{figure}
\centering
\includegraphics[scale=0.2]{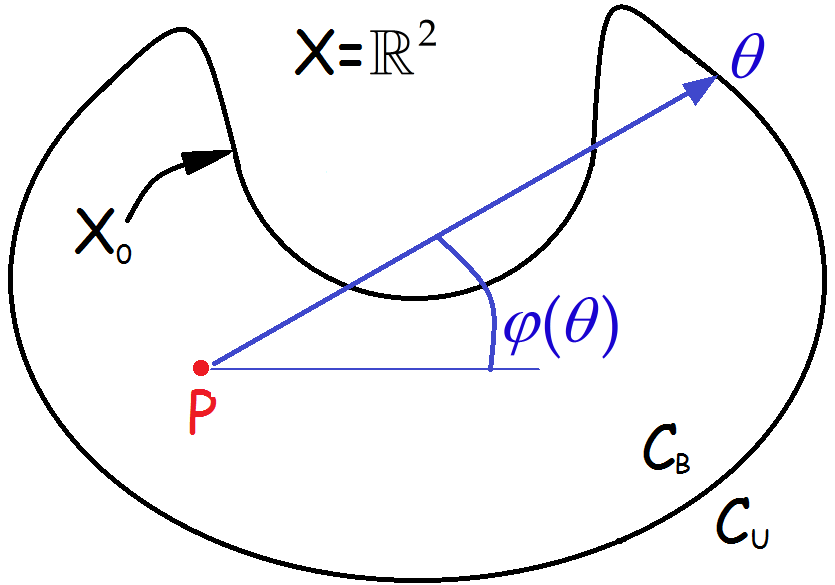}
\caption{\textbf{Rotation number on a quasiperiodic curve.} Given a quasiperiodic curve $X_0$ embedded in $X=\mathbb{R}^2$, one can define $\phi$ as in Eq. \ref{eqn:phi_1D}. Let $\rho_\phi := \lim_{N\to\infty}\frac{1}{N}\sum \limits_{n=0}^{N-1}w(\frac{n}{N})[\bar{\phi}_{n+1}-\bar{\phi}_n]$. If $p$ lies outside the curve $X_0$, then $\rho_\phi=0$. If $p$ lies in the interior of $X_0$, then $\rho_\phi $ is $\rho$ or $1-\rho$, both being legitimate representations of $\rho$. Note that $\bar{\phi}_{n+1}-\bar{\phi}_n$ can assume both positive and negaitve values.}
\label{fig:Cashew}
\end{figure}

\subsection{Computing Fourier series.}

{\bf Example 2, Fourier Series of the Embedding.}
Assume $X\subset \mathbb{R}^D$.
If a map $T:X\rightarrow X$ is quasiperiodic on $X_0$,
then $X_0$ has
a parameterization $x = h(\theta)$, where $h:\torus \to \mathbb{R}^D$, for which $x_{n+1}=T(x_n)$ is conjugate to the pure rotation $\theta_{n+1}=\theta_n+ \rho $.

We can compute a Fourier series for $h(\theta)$ provided the rotation number $ \rho $ for $\theta$ is known, but that can be calculated as a weighted average, as explained above in Corollary \ref{prop:rot_num_convergence}.
The map $h$ is not known explicitly, but its values ${(x_n:=h(n \rho \mod~1))_{n=0,1,2,\ldots}}$ are known.
For every $ k \in\mathbb{Z}^d$, the $ k $-th Fourier coefficient of $h$ is described below.
\[a_{ k }(h) := \int_\torus h(\theta)e^{-\Iota 2\pi  k \cdot\theta}d\theta , \mbox{ so } h(\theta)=\underset{ k \in\mathbb{Z}^d}{\Sigma}a_{ k }e^{\Iota 2\pi  k \cdot\theta}.\]
The degree of differentiability of $h$ can be estimated from the decay rate of  $a_{ k }$ with $\| k \|$.
Therefore, the crucial step in this estimation is an accurate calculation of each $a_{ k }$, which can be obtained as the limit of a weighted Birkhoff average, as stated in the following Corollary.

\begin{corollary}\label{cor:Fourier}
Under the assumptions of Corollary \ref{prop:rot_num_convergence}, for each $k\in\mathbb{Z}^d$, the quantity
\[
\Q_N[h(\theta)e^{-\Iota 2\pi  k \cdot\theta}]=
\frac{1}{A_N}\sum\limits_{n=0}^{N-1}w(\frac{n}{N})x_ne^{-\Iota 2\pi n k \cdot \rho }\]
has super convergence as $N\rightarrow\infty$ to $a_{ k }(h)$, the $k$-th Fourier coefficient of $h$
\end{corollary}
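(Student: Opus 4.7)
The plan is to recognize the displayed sum as a weighted Birkhoff average of a single $C^\infty$ function evaluated along a pure rotation orbit on the torus, and then invoke Theorem \ref{thm:A1} directly. This reduces the corollary to a one-line algebraic identification, provided one picks the right test function.

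First I would exploit the conjugacy $h:\torus\to X_0$ that comes with the quasiperiodicity of $T$. Since $h$ is determined only up to a torus translation, I may (and do) choose $h$ so that $h(0)=x_0$; then $x_n=h(n\rho\bmod 1)$ for every $n\ge 0$. With this normalization I define a complex-valued test function on $\torus$ by
\[
\tilde f(\theta):=h(\theta)\,e^{-\Iota 2\pi k\cdot\theta}.
\]
Because $h$ is $C^\infty$ and the character $\theta\mapsto e^{-\Iota 2\pi k\cdot\theta}$ is real analytic on $\torus$, the product $\tilde f$ is $C^\infty$. Its real and imaginary parts take values in the finite-dimensional real vector space $\mm R^D$, so it fits the setting of Theorem \ref{thm:A1} (applied componentwise if desired).

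Next I would apply Theorem \ref{thm:A1} with the ambient system taken to be $T=\Trho$ acting on $X=X_0=\torus$. This is a legal instance of the theorem: $\torus$ is a $C^\infty$ manifold, $\Trho$ is trivially $C^\infty$-quasiperiodic on all of $\torus$ (via the identity conjugacy), the rotation vector $\rho$ is Diophantine by the hypotheses carried over from Corollary \ref{prop:rot_num_convergence}, and the invariant probability measure is Lebesgue measure $\lambda$. The theorem then asserts that
\[
\Q_N(\tilde f)(0)=\frac{1}{A_N}\sum_{n=0}^{N-1}w\!\left(\tfrac{n}{N}\right)\tilde f(n\rho)
\]
super converges, uniformly in the base point, to $\int_\torus\tilde f\,d\lambda$, which by the very definition of the Fourier coefficient equals $a_k(h)$.

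The final step is the substitution $\tilde f(n\rho)=h(n\rho)e^{-\Iota 2\pi nk\cdot\rho}=x_n e^{-\Iota 2\pi nk\cdot\rho}$, which turns the weighted average above into the sum displayed in the statement. I don't anticipate a serious obstacle: Theorem \ref{thm:A1} does all of the analytic heavy lifting and the rest is a bookkeeping exercise. The only minor point to flag is the normalization $h(0)=x_0$, which merely pins down the phase of $a_k(h)$ that is inherent to any choice of conjugacy; a different starting point $x_0=h(\theta_0)$ would simply multiply the limit by $e^{-\Iota 2\pi k\cdot\theta_0}$, corresponding to the same Fourier coefficient for the shifted parametrization $h(\cdot+\theta_0)$.
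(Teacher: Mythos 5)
Your proposal is correct and follows essentially the same route as the paper: apply Theorem \ref{thm:A1} to the smooth function $h(\theta)e^{-\Iota 2\pi k\cdot\theta}$ on $\torus$ under the pure rotation $\Trho$, and identify the limit $\int_\torus h(\theta)e^{-\Iota 2\pi k\cdot\theta}\,d\theta$ as $a_k(h)$. The extra remarks on normalizing $h(0)=x_0$ and the resulting phase are a harmless bookkeeping addition the paper leaves implicit.
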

\begin{proof}
By Theorem \ref{thm:A1}, the limit of the weighted Birkhoff average of the smooth function $h(\theta)e^{-\Iota 2\pi  k \cdot\theta}$ has super convergence to the integral $\int_\torus h(\theta)e^{-\Iota 2\pi  k \cdot\theta}d\theta$, which is precisely $a_{ k }(h)$ , the $ k $-th Fourier coefficient of $h$. \qed
\end{proof}

Determining the \emph{smoothness of conjugacy to a pure rotation} is an old problem. While we can only determine the degree of differentiability of the conjugacy function $h$ computationally (non-rigorously) by observing how quickly its Fourier series coefficients $a_k$ go to $0$ as $\|k\|\rightarrow\infty$, the papers \cite{HermanSeminal}, \cite{AbsContConj}, \cite{KatzOrn2} and \cite{Yoccoz1} arrive at rigorous conclusions on the differentiability of $h$ by making various assumptions on the smoothness of the quasiperiodic map $T$ and the Diophantine class of its rotation number $\rho$. In our case, we conclude that $h$ is real analytic if $\|a_k\|$ decreases exponentially fast, i.e., $\log \|a_{\vec k}\|\le A + B|\vec k|$ for some $A$ and $B$, to the extent checkable with a given computer precision. 
Also see \cite{DSSY} for a discussion on computing Fourier coefficients of maps between tori instead of maps from the torus to $\mathbb{R}^D$.

{\bf Example 3, computing the integral of a periodic $C^\infty$ function.}
We designed the above theorem as a tool for investigating quasiperiodic sets, but sometimes we can artificially create the quasiperiodic dynamics. To integrate a $C^\infty$ map with respect to the Lebesgue measure when the map is periodic in each of its $d$ variables, we can rescale its domain to a d-dimensional torus $\torus =[0,1]^d\mod~1$ in each coordinate. Choose $ \rho  = (\rho_1,\cdots,\rho_d)\in (0,1)^d$ of Diophantine class $\beta>0$. Let $T = T_{ \rho }$ be the rotation by the Diophantine vector $\rho$ on $\mathbb{T}^d$. Let $w$ be the exponential weighting function Eq. \ref{eqn:weight}. Then by Theorem \ref{thm:A1}, for every $ \theta \in \mathbb{T}^d$, $\Q_N(f)(\theta)$ {\it has super convergence to $\int_{\mathbb{T}^d} f d\mu$ and convergence is uniform in $\theta$}.

\section{A more general $C^m$ version of Theorem \ref{thm:A1}}\label{sect:Cm_Version}

Theorem \ref{thm:A1} is a special case of the following more detailed Theorem \ref{thm:A2}.

\boldmath $C^m$ \unboldmath \textbf{bump functions}. Let $m\geq 0$ be an integer. We will say that a function $w:\mathbb{R}\rightarrow\mathbb{R}$ is a \boldmath $C^m$ \unboldmath \textbf{bump function} if $w$ is a $C^m$ function, is $0$ outside $[0,1]$ and $\int_0^1w(t)dt>0$. Hence $w$ and its first $m$ derivatives are $0$ at $0$ and $1$.
For example, the function which is $x(1-x)$ on $[0,1]$ is $C^0$, while $\sin^2(\pi t)$ is $C^1$. 

\begin{theorem}\label{thm:A2}
Let  $m>1$ be an integer and let $w$ be a $C^{m^*}$ bump function for some $m^* \ge m$. For $M \in\mathbb{N}$, let $X$ be a $C^M$ manifold and $T:X \to X$ be a $C^M$ d-dimensional quasiperiodic map on $X_0\subseteq X$, with invariant probability measure $\mu$ and a rotation vector of Diophantine class $\beta$. Let $f:X \to E$ be $C^{M}$, where $E$ is a finite-dimensional, real vector space.
Then there is a constant $C_m$ depending upon $w,f,m,M,$ and $\beta$ but independent of $x_0\in X_0$ such that
\begin{equation}\label{eqn:Mm2}
\left|(\Q_Nf)(x_0)- \int_{X_0}\!f~d\mu\right|  \le C_m\!N^{-\Pow} ,
\end{equation}
provided the ``smoothness'' $M$ satisfies
\begin{equation}\label{eqn:Mm}
M>d+m(d+\beta).
\end{equation}
\end{theorem}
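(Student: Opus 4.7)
My plan is to use the conjugacy $h$ to transfer the problem to a pure rotation on $\torus$, Fourier-expand the resulting function, and bound each frequency mode by combining the smoothness of $w$ with the Diophantine condition on $\rho$. First, let $h:\torus\to X_0$ be the $C^M$ conjugacy and set $g:=f\circ h\in C^M(\torus,E)$ and $\theta_0:=h^{-1}(x_0)$. Since $h$ intertwines $T$ with $\Trho$, the orbit $\{T^n x_0\}$ corresponds to $\{\theta_0+n\rho\}$, and hence $\Q_N f(x_0)=A_N^{-1}\sum_{n=0}^{N-1}w(n/N)\,g(\theta_0+n\rho)$ while $\int_{X_0}f\,d\mu=\int_{\torus}g\,d\lambda$.

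Writing $g(\theta)=\sum_{k\in\mathbb{Z}^d}\hat g(k)\,e^{\Iota 2\pi k\cdot\theta}$ and substituting, the $k=0$ term contributes exactly $\hat g(0)=\int g\,d\lambda$, so
\[
\Q_N f(x_0)-\int_{X_0}\!f\,d\mu \;=\; \sum_{k\ne 0}\hat g(k)\,e^{\Iota 2\pi k\cdot\theta_0}\,\frac{S_N(k\cdot\rho)}{A_N},\qquad S_N(\alpha):=\sum_{n=0}^{N-1}w\!\left(\tfrac{n}{N}\right)e^{\Iota 2\pi n\alpha}.
\]
Everything now hinges on a uniform bound for $|S_N(\alpha)|/A_N$. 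The key lemma I would establish is
\[
\frac{|S_N(\alpha)|}{A_N} \;\le\; \frac{C}{N^{m}\,\|\alpha\|^{m}},
\]
where $\|\alpha\|$ denotes the distance from $\alpha$ to $\mathbb{Z}$. I would prove it via Poisson summation applied to $\phi(x):=w(x/N)\,e^{\Iota 2\pi\alpha x}$: because $w$ and its first $m^{*}\ge m$ derivatives vanish at $0$ and $1$, $\phi$ is a $C^{m^{*}}$ function compactly supported in $[0,N]$, and Poisson summation yields $S_N(\alpha)=\sum_{\xi\in\mathbb{Z}}N\,\hat w((\xi-\alpha)N)$, with $\hat w(\eta):=\int_{0}^{1}w(u)e^{-\Iota 2\pi\eta u}\,du$. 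Repeated integration by parts (no boundary contributions, by the endpoint vanishing of $w$) gives $|\hat w(\eta)|\le \|w^{(m)}\|_{1}/(2\pi|\eta|)^{m}$, and the $\xi$ closest to $\alpha$ dominates the geometric tail, yielding the bound after dividing by $A_N\sim N\!\int_{0}^{1}\!w$.

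The remaining ingredients are standard. The Diophantine hypothesis gives $\|k\cdot\rho\|\ge C_\beta/\|k\|^{d+\beta}$, so $|S_N(k\cdot\rho)|/A_N\le C'\,\|k\|^{m(d+\beta)}/N^{m}$. The $C^{M}$-smoothness of $g$, inherited from that of $f$ and $h$, yields $|\hat g(k)|\le C_g/\|k\|^{M}$ by $M$-fold integration by parts in whichever coordinate $\theta_{j}$ maximizes $|k_{j}|$. Combining,
\[
\left|\Q_N f(x_0)-\int_{X_0}\!f\,d\mu\right| \;\le\; \frac{C''}{N^{m}}\sum_{k\ne 0}\frac{1}{\|k\|^{M-m(d+\beta)}},
\]
uniformly in $x_0$ since $\theta_0$ enters only through unimodular factors. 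The tail sum converges iff $M-m(d+\beta)>d$, which is precisely the hypothesis $M>d+m(d+\beta)$, and this produces the constant $C_m$. The main obstacle is the uniform estimate on $S_N(\alpha)$: the interplay of the infinite-order endpoint vanishing of $w$ (which legitimizes iterating integration by parts) with the Diophantine denominator $\|k\cdot\rho\|^{-m}$ is the technical heart of the argument; once that lemma is in place, the bookkeeping of Fourier decay against Diophantine blow-up is routine.
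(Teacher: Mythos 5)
Your proposal is correct and follows essentially the same route as the paper: reduce to a pure rotation via the conjugacy, Fourier-expand $f\circ h$, apply Poisson summation to the weighted exponential sum, integrate by parts $m$ times using the endpoint vanishing of $w$, and control the nearest-integer term with the Diophantine condition, arriving at the same convergence criterion $M>d+m(d+\beta)$. The only difference is organizational — you package the oscillatory-sum estimate as a standalone lemma on $|S_N(\alpha)|/A_N$, whereas the paper carries the sum over $n$ through the main computation — but the mathematical content is identical.
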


\textbf{Proof of Theorem \ref{thm:A1} as a corollary of Theorem \ref{thm:A2}.} Assume the hypotheses of Theorem \ref{thm:A1}. Then the hypotheses of Theorem \ref{thm:A2} are satisfied with $M=\infty$.  Therefore, for every integer $m>1$, Ineq. \ref{eqn:Mm2} holds for some $C_m>0$.
Therefore, $\Q_N(f)$ has super convergence to $\int_{X_0}\! fd\mu$, uniformly in $x_0$. \qed

\subsection{Proof of Theorem \ref{thm:A2}}

We need the following lemma in our proof.
\begin{lemma}[Poisson Summation Formula, \cite{PoissonSum}]\label{prop:PoissonSum}
For each $g\in L^2(\mathbb{R})$, 
\[\Sigma_n g(n)=
\Sigma_n \int_\mathbb{R} g(t)e^{-i2\pi nt}dt.\]
\end{lemma}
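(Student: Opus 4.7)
The plan is to prove the Poisson Summation Formula by the standard ``periodization'' argument: build a 1-periodic function out of $g$ by summing its integer translates, compute the Fourier coefficients of this periodization, and identify them with the values of $\hat g$ at integers. Evaluating the Fourier series of the periodization at the origin then gives the identity.

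Concretely, I would first define
\[
G(x) := \sum_{n\in\mathbb{Z}} g(x+n),
\]
and argue that the series converges (at least in $L^2(\mathbb{T})$ or absolutely, depending on the working assumptions on $g$) to a 1-periodic function on $\mathbb{R}$. For $g\in L^2(\mathbb{R})$ the cleanest route is to treat $G$ as an element of $L^2(\mathbb{T})$ and compute its Fourier coefficients using Fubini/Parseval to justify interchanges; for the sharper pointwise statement one assumes enough decay on $g$ and $\hat g$ (Schwartz class is the textbook setting) so that everything converges absolutely and uniformly. Next, I would compute the $k$-th Fourier coefficient of $G$:
\[
\hat G(k) = \int_0^1 G(x)\, e^{-\Iota 2\pi k x}\,dx = \sum_{n\in\mathbb{Z}}\int_0^1 g(x+n)\, e^{-\Iota 2\pi k x}\,dx,
\]
and, using the substitution $y = x+n$ together with the identity $e^{-\Iota 2\pi k n} = 1$, unfold the sum of integrals over the intervals $[n,n+1]$ into a single integral over $\mathbb{R}$:
\[
\hat G(k) = \int_{\mathbb{R}} g(y)\, e^{-\Iota 2\pi k y}\,dy.
\]
This is precisely the quantity on the right-hand side of the lemma.

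The last step is to use Fourier inversion on $\mathbb{T}$: under the standing smoothness/decay assumption, $G(x) = \sum_k \hat G(k)\, e^{\Iota 2\pi k x}$. Evaluating at $x=0$ gives
\[
\sum_{n\in\mathbb{Z}} g(n) = G(0) = \sum_{k\in\mathbb{Z}} \hat G(k) = \sum_{k\in\mathbb{Z}} \int_{\mathbb{R}} g(y)\, e^{-\Iota 2\pi k y}\,dy,
\]
which is the claimed formula.

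The main obstacle is not algebraic but analytic: justifying that both sides converge and that the interchanges of summation and integration are legitimate. In the $L^2$-only generality as literally stated, $\sum_n g(n)$ need not even make sense pointwise, so in practice one invokes the lemma for $g$ with enough decay (e.g.\ a Schwartz function, or one obtained from multiplying a smooth compactly supported bump by a bounded factor, which is exactly the regime in which the paper applies it). Under such hypotheses the periodization converges uniformly, its Fourier series converges uniformly, and the identity holds pointwise; hence I would cite those standard decay/smoothness assumptions explicitly when invoking the lemma in the proof of Theorem~\ref{thm:A2}.
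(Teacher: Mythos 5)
Your periodization argument is the standard textbook proof of the Poisson Summation Formula and it is correct under appropriate hypotheses. The paper itself offers no proof of this lemma at all --- it is stated with a citation to the literature and used as a black box --- so there is nothing to compare against on the level of technique. What is worth noting is that you have correctly identified a real defect in the statement as the paper gives it: for a general $g\in L^2(\mathbb{R})$ the left-hand side $\Sigma_n g(n)$ is not even well defined (an $L^2$ function has no canonical pointwise values), and the right-hand side need not converge either, so the lemma as literally written is false. Your remark that one should instead invoke the formula for functions with enough smoothness and decay is exactly the right fix, and it is harmless in context: in the proof of Theorem \ref{thm:A2} the lemma is applied to $g(t) = w(t/N)\,\sigma_k(\theta+t\rho)$, which is compactly supported and $C^{m^*}$ with $m^*\ge m>1$, so the periodization $G$ converges (it is a finite sum on each compact set), $\hat G(k)=O(|k|^{-m})$ by integration by parts, the Fourier series of $G$ converges absolutely and uniformly, and the pointwise identity at $x=0$ holds. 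Your proof, specialized to that class of $g$, fully justifies the paper's use of the lemma.
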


To prove Theorem \ref{thm:A2}, we use the coordinates $\theta$ on $X_0$ for which Eq. \ref{eqn:conjugacy2} holds. So for the rest of this section, $X_0=\torus$ and $T$ is an irrational rotation $\Trho$, as described in in Eq. \ref{eqn:Rot}. Of course
$T^n_{ \rho }=T_{n \rho }: \theta \mapsto \theta +n \rho \pmod{1}$.
As a result,
$f(T^n_{ \rho }( \theta ))=f( \theta +n \rho )$.
Therefore, in this dynamical system, Eq. \ref{eqn:QN} takes the form
\[
(\Q_N f)(\theta)=\frac{1}{A_N}\underset{n}{\Sigma}w(\frac{n}{N})f( \theta +n \rho ).
\]
For each $f\in L^2(\torus,\lambda)$ we can write its Fourier series representation $f(\theta)=\underset{k\in\mathbb{Z}^d}{\sum} a_k \sigma_k(\theta)$, where $\theta\in\torus$ and \boldmath $\sigma_k(\theta)$ \unboldmath $:=e^{\Iota 2\pi k\cdot\theta}$. Let \boldmath $\mathbb{Z}^d_0$ \unboldmath denote the set of integer-valued vectors excluding the zero vector, i.e., $\mathbb{Z}^d\setminus\{0\}$. Note that the Fourier coefficient $a_0$ of $f$ is the integral $\int_{\torus}fd\theta$ and is also $\Q_N(a_0 \sigma_0)(\theta)$ for every $N$ since $\sigma_0 \equiv 1$. Therefore, 
\[\begin{split}
E_N &:=(\Q_N f)(\theta)-\int_{\torus} f(t)dt\\
&=\Sigma_{k\in\mathbb{Z}^d_0}\ a_k \Q_N \sigma_k(\theta)\\
&= A_N^{-1}\Sigma_{k\in\mathbb{Z}^d_0}a_k \left[\Sigma_{n} \ w(n/N)\sigma_k(\theta+n\rho)\right] \\
&= A_N^{-1}\Sigma_{k\in\mathbb{Z}^d_0}a_k \left[\Sigma_{n} \int_\mathbb{R} w(t/N)\sigma_k(\theta+t\rho)e^{-i2\pi nt}dt \right], \mbox{by Lemma \ref{prop:PoissonSum}}\\
&= A_N^{-1}\Sigma_{k\in\mathbb{Z}^d_0} a_k \left[
e^{i2\pi k\cdot\theta}\Sigma_{n} \int_\mathbb{R} w(t/N)e^{i2\pi t(k\cdot\rho-n)}dt \right]\\
\end{split}\]
Set $s=t/N$ so that $Nds=dt$ and 
\[
\left| E_N \right| \leq (N/A_N)\Sigma_{k\in\mathbb{Z}^d_0}\left| a_k\right|\Sigma_{n}  \left|\int_\mathbb{R} w(s)e^{i2\pi Ns(k\cdot\rho-n)}ds \right|.
\]
But $N/A_N$ is bounded by some constant $C_w>0$ since it converges to $\int_0^1w(s)ds$.

We establish a bound for  
$\int_0^1 w(s)e^{i\Omega s}ds$, where $\Omega :={\Iota2\pi N(k\cdot\rho-n)}$. 
Since $w(s)$ and its first $m$ derivatives vanish at $0$ and $1$, integrating by parts $m$ times yields 
\begin{equation}\label{lem:Fourier_Decay}
\left|\int_0^1 w(s)e^{i\Omega s}ds\right|=\left|\Omega^{-m}\int_0^1 w^{(m)}(s)e^{i\Omega s}ds\right|\le |\Omega|^{-m} \|w^{(m)}\|,
\end{equation}
where $\|w^{(m)}\|$ is the $L^1$ norm of the $m^{th}$ derivative of $w$.

Since $f$ is a $C^{M}$-function, differentiating it $M$ times gives $|a_k|\leq C_{f,M} \|k\|^{-M}$, for each $k\in\mathbb{Z}^d_{0}$ for some \boldmath $C_{f,M}>0$ \unboldmath that depends only on $f$ and $M$.  Ineq. \ref{lem:Fourier_Decay} gives
\[\begin{split}
\left| E_N \right| 
&\leq C_w~\Sigma_{k\in\mathbb{Z}^d_0} ~C_{f,M} \|k\|^{-M}~ \Sigma_{n} ~(2\pi)^{-m} N^{-m} \|k\cdot\rho-n\|^{-m} \|w^{(m)} \|, \\
& = C^* N^{-m}~[\Sigma_{k\in\mathbb{Z}^d_0} ~\|k\|^{-M}~\Sigma_{n}\ \|k\cdot\rho-n\|^{-m}]\\
\end{split}\]
where we have collected constants into $C^* := C_w (2\pi)^{-m}C_{f,M}\|w^{(m)}\|$. Therefore our proof is finished when we show that 
the quantity in the above brackets $B:=\Sigma_{k\in\mathbb{Z}^d_0} \|k\|^{-M}\Sigma_{n}\ \|k\cdot\rho-n\|^{-m}$ is finite. We will bound $B$ by  $\Sigma_{k\in\mathbb{Z}^d_0} \|k\|^{-d^*}$, which is finite if and only if  $d^* > d$.

To bound the sum $\underset{n}{\Sigma}\|k\cdot\rho-n\|^{-m}$, 
note that there is a unique integer $n_0$ which is closest to the number $k\cdot\rho$. 
Ineq. \ref{eqn:Dioph} implies $\|k\cdot\rho-n_0\|^{-m} \leq C_\beta^m |k|^{m(d+\beta)}$. 
The rest of the sum satisfies $\Sigma_{n\neq n_0}\|k\cdot\rho-n\|^{-m}\leq \Sigma_{n\neq 0}(|n|+0.5)^{-m}$, which is bounded since $m>1$ . Hence there is a constant $C_{\rho,m} >1$ such that $\underset{n}{\Sigma}\|k\cdot\rho-n\|^{-m} \leq C_{\rho,m}\| k \|^{m(d+\beta)}$.   
Hence
$B \le constant \times\Sigma_{k\in\mathbb{Z}^d_0} \| k \|^{-d^*}$ where 
$-d^*:=-M+m(d+\beta)$, so by Ineq. \ref{eqn:Mm}, $d^*>d$ and $B$ is finite. \qed

{\bf Acknowledgements} JY was partially supported by National Research Initiative Competitive grants 2009-35205-05209 and 2008-04049 from the U.S.D.A. 
We thank a commentator who wishes to remain anonymous for suggesting the use of the Poisson Summation Formula to simplify our original proof.
\bibliographystyle{unsrt}
\bibliography{Weighted_calc_bibliography,1D_bibliography}
\end{document}